\newenvironment{gap}
  {\color{blue}}%
  {}%
\newenvironment{nap} 
  {\color{red}}%
  {}%
\newtheorem{defn}{Definition}[section]
\newtheorem{lem}[defn]{Lemma}
\newtheorem{cor}[defn]{Corollary}
\newtheorem{ex}[defn]{Example}
\newtheorem{prop}[defn]{Proposition}
\newtheorem{rems}[defn]{Remarks}
\DeclareMathOperator{\Ker}{Ker}
\newcommand{\en}{E\ensuremath{{_0}}}
\title{CCR and CAR flows over convex cones
}
\author{R. Srinivasan}
\address{Chennai Mathematical Institute, H1, SIPCOT IT Park, Kelambakkam, Siruseri 603103, India.}
\email{vasanth@cmi.ac.in}
\subjclass[2010]{Primary  46L55; Secondary 46L40, 46L53, 46C99}
 \keywords{$E_0$-semigroups, product systems, convex cones, CCR flows, CAR flows}
\begin{document}


\newcommand{\norm}[1]{\ensuremath{\left\|#1\right\|}}
\newcommand{\ip}[1]{\ensuremath{\left\langle#1\right\rangle}}
\newcommand{\dist}{\hbox{dist}}
\newcommand{\bra}[1]{\ensuremath{\left\langle#1\right|}}
\newcommand{\ket}[1]{\ensuremath{\left|#1\right\rangle}}
\newcommand{\lin}{\ensuremath{\mathrm{Span}}}
\renewcommand{\ker}{\ensuremath{\mathrm{ker}}}
\newcommand{\ran}{\ensuremath{\mathrm{Ran}}}
\newcommand{\dom}{\ensuremath{\mathrm{Dom}}}
\newcommand{\supp}{\ensuremath{\mathrm{supp}}}
\newcommand{\id}{\hbox{id}}

\newcommand{\overtimes}{\ensuremath{\overline{\otimes}}}
\newcommand{\undertimes}{\ensuremath{\underline{\otimes}}}
\newcommand{\opower}[1]{\ensuremath{^{\otimes #1}}}

\newcommand{\N}{\ensuremath{\mathbb{N}}} 
\newcommand{\Z}{\ensuremath{\mathbb{Z}}} 
\newcommand{\Q}{\ensuremath{\mathbb{Q}}} 
\newcommand{\R}{\ensuremath{\mathbb{R}}} 
\newcommand{\C}{\ensuremath{\mathbb{C}}} 
\newcommand{\F}{\ensuremath{\mathcal{F}}} 
\newcommand{\B}{\ensuremath{\mathcal{B}}}
\newcommand{\D}{\ensuremath{\mathcal{D}}} 
\newcommand{\E}{\ensuremath{\mathcal{E}}} 
\newcommand{\W}{\ensuremath{\mathcal{W}}}
\newcommand{\V}{\ensuremath{\mathcal{V}}}
\renewcommand{\H}{\ensuremath{\mathcal{H}}}
\newcommand{\K}{\ensuremath{\mathsf{K}}}
\newcommand{\h}{\ensuremath{\mathrm{h}}}
\renewcommand{\k}{\ensuremath{\mathrm{k}}}
\newcommand{\J}{\ensuremath{\mathscr{J}}}
\newcommand{\A}{\ensuremath{\mathcal{A}}}
\renewcommand{\L}{\ensuremath{\mathcal{L}}}
\newcommand{\n}{\ensuremath{\mathrm{N}}} 
\newcommand{\m}{\ensuremath{\mathrm{M}}} 

\newcommand{\munit}{\ensuremath{\mu\hbox{nit~}}}
\newcommand{\munits}{\ensuremath{\mu\hbox{nits~}}}
\newcommand{\unitset}[1]{\ensuremath{\mathcal{U}_{#1}}}
\newcommand{\unitspace}[1]{\ensuremath{H(\mathcal{U}_{#1})}}
\newcommand{\munitset}[1]{\ensuremath{\mathcal{U}_{#1,#1'}}}
\newcommand{\munitspace}[1]{\ensuremath{H(\mathcal{U}_{#1,#1'})}}
\newcommand{\gaugespace}[1]{\ensuremath{H(G(#1))}}
\newcommand{\ind}{\ensuremath{\mathrm{Ind}}}

\newcommand{\expectation}{\ensuremath{\mathbb{E}}}
\newcommand{\Exp}{\ensuremath{\mathrm{Exp}}}
\newcommand{\Log}{\ensuremath{\mathrm{Log}}}

\newcommand{\alg}{\ensuremath{\mathrm{A}}}
\newcommand{\factor}{\ensuremath{\mathrm{M}}}
\newcommand{\oneinfty}{I$_\infty~$}
\newcommand{\twoone}{II$_1~$}
\newcommand{\twoinfty}{II$_\infty~$}
\newcommand{\three}[1]{III$_{#1}~$}
\newcommand{\hyperfinite}{\ensuremath{\mathcal{R}}}
\newcommand{\semiflowalg}{\ensuremath{\mathcal{A}}}

\begin{abstract}
Recently it is proved in \cite{SS} that CCR flows over convex cones are cocycle conjugate if and only if the associated isometric representations are conjugate. We provide a very short, simple and direct proof of that. Using the same idea we prove the analogous statement for CAR flows as well. Further we show that CCR flows are not cocycle conjugate to the CAR flows when the (multi-parameter) isometric representation is  `proper', a condition which  is satisfied by all known examples.
\end{abstract}

 \maketitle

\section{Introduction}
The study of one parameter $E_0-$semigroups, initiated by R.T. Powers, with enormous contribution by Arveson, has been an active area of research in operator algebras, for more than thirty years. In last twenty years there are also some exciting works on type II and type III $E_0-$semigroups, and on $E_0-$semigroups on non-type-I factors. Very recently  $E_0-$semigroups over convex cones are being investigated, leading to  some interesting results (see \cite{anbu}, \cite{anbu-SS}).

Arveson introduced the notion of decomposability for product systems of one parameter $E_0-$semigroups. He showed in \cite{arv-d} that one parameter $E_0-$semigroups associated with decomposable product systems are cocycle conjugate to CCR flows. In \cite{SS},  some of Arveson's ideas are  extended to the multi-parameter settings to prove the injectivity of CCR functor.  But the proof is very long, runs in to several pages involving messy computations. We provide a very short proof, which follows almost immediately after the definitions.  Also the ideas of \cite{SS} does not help to tackle the case of CAR flows. Even for $1-$parameter CAR flows it is difficult to describe the decomposable vectors, and the result is not available in the literature.  In fact even the units of $1-$parameter CAR flows are described in \cite{MS1} through an indirect way. Our ideas works perfectly for the CAR case also, establishing the injectivity of CAR functor also, with little bit more effort. 

We finally investigate the cocycle conjugacy between CCR flows and CAR flows. In contrast to the $1-$parameter case we show that they are not cocycle conjugate in in most of the cases. We assume a technical condition which is satisfied by all our known examples.

This version will be replaced by an updated version soon. Part of this work was done when the author was visiting University of Kyoto, during April to November 2018. The author would like to acknowledge the JSPS fellowship and thank Masaki Izumi for inviting to University of Kyoto.  The author also would like to thank Masaki Izumi for discussions.

\section{preliminaries}\label{pre}

We denote the real numbers, complex numbers and natural numbers by  $\R, \C$ and $ \N$ respectively and  by $\R_+$ we denote the semigroup $(0,\infty)$. For $k \in \N$ we denote $[k] = \{1, 2, \cdots k\}$. 

 All our Hilbert spaces are complex, separable, and are  equipped with inner products which are  anti-linear in the first variable and linear in the second variable. 
  For a measurable set $S$ and a Hilbert space $\k$ of finite or infinite dimension, $L^2(S,\k)$ denote the Hilbert space of all square integrable functions from $S$ to $\k$.  The $1-$parameter semigroups of right shifts $\{S_t: t \in \R_+\}$  on  $L^2(\R_+,\k)$ is defined by 
 \begin{eqnarray*}(S_tf)(s) & = & 0, \quad s<t,\\
& = & f(s-t), \quad s \geq t,
\end{eqnarray*} 
for $f \in L^2(\R_+,\k)$.

Let $P \subset \mathbb{R}^{d}$ be a closed convex cone. We assume that $P$ is  spanning  and pointed, i.e. $P-P=\mathbb{R}^{d}$  and $P \cap -P=\{0\}$. 
Let $\Omega$ denote the interior of $P$. Then $\Omega$ is dense in $P$.    Further  $\Omega$ is also spanning i.e. $\Omega-\Omega=\mathbb{R}^{d}$.  For $x, y \in \mathbb{R}^{d}$, we write $x \geq y$ and $x > y$ if $x-y \in P$ and $x-y \in \Omega$ respectively.

 \begin{defn}
 An $E_0-$semigroup over $P$  on $B(H)$ is a family of normal unital $*-$endomorphisms $\alpha = \{\alpha_{x}: x \in P\}$  of $B(H)$ satisfying

\begin{itemize}
\item[(i)] $\alpha_{x}\circ \alpha_{y}=\alpha_{x+y}$ for all $x, y \in P$ and  $\alpha_0= I_{B(H)}$,
\item[(ii)]  the map $P  \ni x \to \ip{\alpha_{x}(A)\xi,\eta} \in \mathbb{C}$ is continuous for all $A \in B(H)$ and $\xi,\eta \in H$. \end{itemize} 

An $E_0-$-semigroup $\alpha$ is  said to be pure if $\cap_{t\geq 0} \alpha_{tx}(\m) =\C, ~~~\forall x\in \Omega$  
\end{defn}

 In this article we always assume that our $E_0-$semigroups are pure, without mentioning.

\begin{defn}
Let $\alpha:=\{\alpha_{x}: x \in P\}$ be an $E_0-$semigroup on $B(H)$.
An $\alpha$-cocycle is  a strongly continuous family of unitaries $\{U_{x}\}_{x \in P}$ satisfying $$U_{x}\alpha_{x}(U_y)=U_{x+y}~~\forall x,y\in P.$$
A cocycle  $\{U_{x}\}_{x \in P}$ is said to be a gauge cocycle if further $U_x \in \alpha_x(B(H))^\prime$.
\end{defn}

Given an $\alpha$-cocycle $\{U_{x}\}_{x \in P}$, it is easy to  verify that 
 $\{Ad(U_x)\circ \alpha_{x}: x \in P \}$ is also an $\en$-semigroup on $B(H)$. This is called  the cocycle perturbation of $\alpha$ by the cocycle $\{U_{x}\}_{x \in P}$.
 
\begin{defn}  Let $\alpha:=\{\alpha_{x}: x \in P\}$ and $\beta:=\{\beta_{x}:  x \in P\} $ be two $\en$-semigroups on $B(H)$ and $B(K)$ respectively. We say that

 \begin{itemize}
  \item[(i)]  $\alpha$ is  conjugate to $\beta$ if there exists a unitary operator $U:H \to K$ such that for every $x \in P$, $\beta_{x}=Ad(U)\circ \alpha_{x} \circ Ad(U^{*})$, and
   \item[(ii)]  $\alpha$ is cocycle conjugate to $\beta$ if there exists a unitary $U:H \to K$ such that  the $\en$-semigroup $\{Ad(U) \circ \alpha_{x} \circ Ad(U)^{*}\}_{x \in P}$ is a cocycle perturbation of $\beta$.  
\end{itemize}
\end{defn}

Clearly cocycle conjugacy is an equivalence relation.   

 For a  complex separable Hilbert space $K$, we denote the symmetric Fock space by $\Gamma_s(K) ):=\bigoplus_{n=0}^\infty{(K)^{\vee n}}$, with vacuum vector $\Omega_s$. 
 We refer to \cite{KRP} for  proofs of the following well-known facts.  
 For $u \in K$, the exponential of $u$ is defined by  $$e(u):=\sum_{n=0}^{\infty}\frac{u^{\otimes n}}{\sqrt{n!}}.$$ Then the set $\{e(u): u \in K\}$ is linearly independent and total in $\Gamma_s(K)$. Exponential vectors satisfy $\ip{e(u),  e(v)} = e^{\ip{u, v}}$ for all $u,v\in K.$   
For $u \in K$, there exists a  unitary operator, denoted $W(u)$ on $\Gamma_s(K)$ determined uniquely by the equation $$W(u)e(v):=e^{-\frac{\|u\|^{2}}{2}-\ip{u|v}}e(u+v), ~~ \forall  v\in K.$$
The operators $\{W(u): u \in K\}$ are called the Weyl operators and they  satisfy the well-known  canonical commutation relations:  
\[W(u)W(v)=e^{-i Im (\ip{u,v)}}W(u+v)~~ \forall u, v\in K.\]
 Further the linear span of $\{W(u): u \in K\}$ is a strongly dense unital $*$-subalgebra of $B(\Gamma_s(K))$. 
 For an isometry $U:K_1 \to K_2$,  its Bosonic second quantization is the isometric operator $\Gamma_s(U): \Gamma_s(K_1) \to \Gamma_s(K_2)$,  satisfying $\Gamma_s(U)e(v)=e(Uv)$ for all $v \in K_1$.   Second quantized unitaries are related to Weyl operators by  $\Gamma(U)W(v)\Gamma(U)^{*}=W(Uv)~~ \forall v \in K_1.$   

Let $\Gamma_a(K):=\bigoplus_{n=0}^\infty{(K)^{\wedge n}}$ be the antisymmetric Fock space over $K$, 
with vacuum vector $\Omega_a$.
For any $f\in K$ the Fermionic creation operator $a^*(f)$ is the bounded operator defined by the linear extension of
$$a^*(f)\xi=\left\{\begin{array}{ll} f & \hbox{if}~\xi=\Omega, \\ f\wedge \xi &  \hbox{if}~ \xi\perp\Omega, \end{array}\right.$$ where $\Omega$ is the vacuum vector. 
The annihilation operator is defined by the adjoint $a(f)=a^*(f)^*$. The creation and annihilation operators satisfy the well-known anti-commutation relations  and they  generate $B(\Gamma_a(K))$ as a von Neumann algebra. For an isometry $U:K_1 \to K_2$,  its Fermionic second quantization is the isometric operator $\Gamma_a(U): \Gamma_a(K_1) \to \Gamma_a(K_2)$,  satisfying 
$\Gamma_a(U)(\xi_1 \wedge \cdots \wedge \xi_n)= U\xi_1 \wedge \cdots \wedge U\xi_n$ for all $\xi_i \in K_1$, $i\in [n]$.

The basic examples of $E_0-$semigroups are CCR flows and CAR flows associated with an isometric representation.  By an isometric representation we mean a strongly continuous semigroup of isometries indexed by $P$.

\begin{ex}\label{ex-CCR-CAR}  Let $K$ be a separable Hilbert space and $V:P \to B(K)$ be an isometric representation of $P$ on $K$. 
Then CCR and CAR flows are constructed as follows.

\begin{itemize}
\item[(i)] (CCR flows) Let $U^s_x: \Gamma_s(\Ker(V_x^*)) \otimes \Gamma_s(K) \mapsto \Gamma_s(K)$   be the extension of \begin{equation}\label{UCCR}  e(\xi_x) \otimes e(\xi) \mapsto e(\xi_x \oplus V_x \xi)~~ \forall \xi_x \in \Ker(V_x^*), \xi \in K. \end{equation} Define $\alpha_x(X) =  U^s_x(1_{\Gamma_s(\Ker(V_x^*))}\otimes  X )(U^s_x)^*$.
Then $\alpha^{V}:=\{\alpha^V_{x}\}_{x \in  P}$  is the unique $E_0$-semigroup (see \cite[Proposition 4.7]{anbu}) on $B(\Gamma(K))$ satisfying \[\alpha^V_{x}(W(u))=W(V_{x}u)~~ \forall ~ x \in P,~u \in K.\]

\item[(ii)]  (CAR flows)  Let  $U^a_x: \Gamma_a(\Ker(V_x^*)) \otimes \Gamma_a(K) \mapsto \Gamma_a(K)$  be the extension of 
\begin{equation}\label{UCAR}(\xi_1 \wedge\xi_2  \cdots \wedge \xi_{m}) \otimes (\eta_1 \wedge\eta_2   \cdots \wedge \eta_{n}) \mapsto V_x \eta_1 \wedge V_x \eta_2 \cdots \wedge V_x \eta_{n} \wedge \xi_1 \wedge\xi_2   \cdots \wedge \xi_{m},\end{equation} for $\xi_i \in \Ker(V_x^*)$, $\eta_j \in K$, and $i\in [m], j \in [n]$. Define $$\beta_x(X) =  U^a_x(1_{\Gamma_a(\Ker(V_x^*))}\otimes  X )(U^a_x)^*.$$
Then it can be verified, as in the case of CCR that  $\beta^{V}:=\{\beta^V_{x}\}_{x \in P}$ is the unique $E_0$-semigroup on $B(\Gamma_a(K))$ satisfying \[
\beta^V_{x}(a(u))=a(V_{x}u)~~ \forall ~ x \in P,~u \in K.\]
 \end{itemize}

The CCR flow $\alpha$ and the CAR flow $\beta$ are pure $E_0-$semigroups if the isometric representation $V$ is pure, that is   $\cap_{t\geq 0} V_{ta} (K) = \{0\}$ for all $a \in \Omega$. We assume that all the isometric representations under consideration in this article are pure.
\end{ex}

For an $E_0-$-semigroup $\alpha$ on $B(H)$ we can associate a concrete product system of Hilbert spaces $\E^\alpha =\{E^\alpha_x: x \in P\}$  by $$E^\alpha_x = \{X \in B(H): \alpha_x(T)X = XT~\forall T \in B(H)\},$$ with inner product  $T^*S = \ip{T, S}1_H$.  Also the map $U_{x,y}: E^\alpha_x\otimes E^\alpha_y \mapsto E^\alpha_{x+y}$ defined by $T_x \otimes T_y \mapsto T_xT_y$ is unitary.  We denote the product system by $(E^\alpha_x, U_{x,y})$

An isomorphism between product systems $(E_x, U_{x,y})$ and $(E'_x, U'_{x,y})$ is a family $\{W_x:  x \in P\}$ of unitary operators between $E_x\mapsto E'_x$ satisfying 
$$ U'_{x+y} (W_x \otimes W_y) = W_{x+y} U_{x,y}, ~~ \forall x,y \in P.$$
There are some measurability conditions involved in the definition of both product system and its isomorphisms, which we assume, but we do not state here, since they are not used explicitly in any of the proofs. The interested reader can refer to  \cite{murugan}. Concrete product systems form a complete invariant for $E_0-$semigroups up to the cocycle conjugacy equivalence (see \cite{murugan}).  

The product systems associated of CCR flows and CAR flows, associated with an isometric $\{V_x: x \in P\}$ on $K$, can be described as follows. Let $ U^s_x,  U^a_x$ be the maps as defined in (\ref{UCCR}) and (\ref{UCAR}) respectively. Let $$E^s_x = \Gamma_s(\Ker(V_x^*)); E^a_x = \Gamma_a(\Ker(V_x^*)),$$ and by defining $T^s_{\xi_x}(\xi) = U^s_x(\xi_x \otimes \xi)$, $T^a_{\eta_x}(\eta) = U^a_x(\eta_x \otimes \eta)$ for $\xi_x \in E^s_x, \xi \in \Gamma_s(K)$, $\eta_x \in E^a_x, \eta \in \Gamma_a(K)$, we can identify the product systems of CCR and CAR flows with $(E_x^s, U^s_{x,y})$ and $(E_x^a, U^a_{x,y})$ respectively.   The product map 
$U^s_{x, y}: E_x^s\otimes E_x^s   \mapsto E_{x+y}^s$, $U^a_{x,y}: E_x^a\otimes E_y^a
\mapsto E_{x+y}^a,$ are defined in a similar way as $U^s_x$, $U^a_x$, by replacing $K$ with $\Ker(V_y^*)$ appropriately in (\ref{UCCR}) and (\ref{UCAR}) respectively. 

While defining units for $E_0-$-semigroups over general convex cones, we need to take the cohomology into consideration. But, since we deal only with CCR flows and CAR flows, which always admit a canonical unit 
belonging to  the trivial cohomological class, the following definition suffices for our purposes in this article. 

\begin{defn}  Let $\alpha:=\{\alpha_{x}: x \in P\}$ be an $E_0-$semigroup on $B(H)$ A unit for $\alpha$ is a strongly continuous family $u =\{u_x : x \in P\}\subseteq B(H)$, satisfying $u_x \in E^\alpha_x$ and $u_x u_y = u_{x+y}$ for all $x,y  \in P$. (In the language of product systems this means $U_{x,y}(u_x\otimes u_y) =u_{x+y}$.)
\end{defn}

An $E_0-$semigroup (or equivalently the associated product system)  is called spatial if it admits a unit.  In spatial product systems we fix a special $\Omega= \{\Omega_x : x \in P\}$, with $\|\Omega_x\|=1$ for all $x \in P$,   as the canonical unit. For CCR flows and CAR flows the canonical unit  is given by the vacuum vectors $\Omega^s=\{\Omega^s_x\in \Gamma_s(\Ker(V_x^*)) : x \in P\} $ and  $\Omega^a=\{\Omega^a_x  \in \Gamma_a(\Ker(V_x^*)): x \in P\}$ respectively. As operators these are represented by the  second quantization of the isometric representation $\{\Gamma_s(V_x): x \in P\}$ and  $\{\Gamma_a(V_x): x \in P\}$ respectively.  

\begin{defn}
An $E_0-$semigroup is said to be of type I if units exists and generate the product system, that is products of the form $u^1_{x_1} u^2_{x_2} \cdots u^n_{x_n}$, with each $u^i$ a unit and $x_1+x_2\cdots x_n= x$ are total in $E_x$. 
\end{defn}

An exponential unit is a unit $u$ satisfying $\ip{u_x,\Omega_x}=1$ for all $x \in P$. 
We denote 
the collection of all exponential units of an  $E_0-$-semigroup $\alpha$ by $\mathfrak{U}_\Omega(\alpha)$.  The set of all exponential units of CCR flows are described by the additive cocycles of the associated  isometric representation. 

\begin{defn}
Let $V: P \to B(K)$ be an isometric representation. A continuous function $h : P \to K$ is called an additive cocycle  for $V$  if  $h_x \in \Ker(V_x^*)$ for all $x \in P$ and  $h_x+ V_x h_y = h_{x+y}.$ for all $ x, y \in P$.
\end{defn}

We denote the collection of all additive cocycle of an isometric representation $V$ by $\mathfrak{A}(V)$.   When $\alpha$ is the CCR flow associated with $V$, the map $$\{h_x: x \in P\}\mapsto \{e(h_x)\: x \in P\}$$  provides a bijection between  $\mathfrak{A}(V)$ and $\mathfrak{U}_\Omega(\alpha)$ (see \cite[Theorem 5.10]{anbu}).

\section{additive decomposability}   In this Section we define an additive version of decomposability for spatial product systems and construct an isometric representation, which forms a cocyce conjugacy invariant for the associated $E_0-$semigroups, under some conditions.  Throughout this section we fix an (arbitrary) spatial $E_0-$semigroup $\alpha=\{\alpha_x: x \in P\}$  with canonical unit $\Omega=\{\Omega_x: x \in P\}$ and product system $E = \{E_x: x \in P\}$.

\begin{defn}  An element $a \in E_x$ is said to be additive decomposable if for all $y\leq x \in P$ there exists $a_y \in E_{y}, a_{x-y}\in E_{x-y}$ satisfying $a_x \perp \Omega_x$,  $a_{x-y} \perp \Omega_{x-y}$ and  $$U_{y, x-y}(a_y\otimes \Omega_{x-y}) + U_{y, x-y}(\Omega_{x} \otimes a_{x-y}) = a.$$
\end{defn}


We denote by $D_x(\alpha)$ the set of all decomposable vectors in $E_x$.  Clearly $D_x(\alpha)$ is a vector space. Since $U_{y, x-y}(a_y\otimes \Omega_{x-y})$ is orthogonal to $U_{y, x-y}(\Omega_{x} \otimes a_{x-y})$ in the above decomposition, it also follows that $D_x(\alpha)$ is a Hilbert space with respect to the restricted inner product.

\begin{lem}\label{ind-d} Let $a\in D_{x+y}$ be such that $U_{x, y}(a_x\otimes \Omega_{y}) + U_{x, y}(\Omega_{x} \otimes a_{y}) = a$, with $E_x\ni a_x\perp \Omega_x, E_y \ni a_y\perp \Omega_y$, then $a_x \in D_x, a_y \in D_y$.
\end{lem}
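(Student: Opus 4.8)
The plan is to prove $a_x\in D_x$; the claim $a_y\in D_y$ will then follow by an entirely symmetric argument. So fix $z\in P$ with $z\le x$; we must exhibit a splitting of $a_x$ over the pair $(z,\,x-z)$. The idea is to write down two decompositions of $a$ inside $E_{x+y}$ — the given one over $(x,y)$ and one furnished by the hypothesis $a\in D_{x+y}$ over $(z,\,x+y-z)$ — and then to contract away the $E_y$-leg.

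Using associativity of the product maps $U_{\cdot,\cdot}$, I would identify $E_{x+y}$ with the triple tensor product $E_z\otimes E_{x-z}\otimes E_y$, under which $E_x\cong E_z\otimes E_{x-z}$, $E_{x+y-z}\cong E_{x-z}\otimes E_y$, and, because $\Omega$ is a unit, $\Omega_{x+y}\leftrightarrow\Omega_z\otimes\Omega_{x-z}\otimes\Omega_y$. Writing $\hat a_x\in E_z\otimes E_{x-z}$ for $a_x$ in the factored picture, the given decomposition becomes
\[
a\;=\;\hat a_x\otimes\Omega_y\;+\;\Omega_z\otimes\Omega_{x-z}\otimes a_y ,
\]
while applying the definition of $a\in D_{x+y}$ at the point $z$ produces $c\in E_z$, $c'\in E_{x+y-z}$ with $c\perp\Omega_z$, $c'\perp\Omega_{x+y-z}$ and, with $\hat c'\in E_{x-z}\otimes E_y$ the image of $c'$,
\[
a\;=\;c\otimes\Omega_{x-z}\otimes\Omega_y\;+\;\Omega_z\otimes\hat c' .
\]
Now apply $1\otimes 1\otimes P_0$, where $P_0$ is the orthogonal projection of $E_y$ onto $\C\Omega_y$. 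The left-hand side collapses to $\hat a_x\otimes\Omega_y$ (the other term dies because $a_y\perp\Omega_y$); the right-hand side becomes $(c\otimes\Omega_{x-z}+\Omega_z\otimes d)\otimes\Omega_y$, where $d\in E_{x-z}$ is defined by $(1\otimes P_0)\hat c'=d\otimes\Omega_y$. Cancelling $\Omega_y$ gives $a_x=U_{z,x-z}(c\otimes\Omega_{x-z})+U_{z,x-z}(\Omega_z\otimes d)$, which is precisely a splitting of $a_x$ over $(z,x-z)$; here $c\perp\Omega_z$ is assumed, while $d\perp\Omega_{x-z}$ follows from $\langle d,\Omega_{x-z}\rangle=\langle(1\otimes P_0)\hat c',\Omega_{x-z}\otimes\Omega_y\rangle=\langle c',\Omega_{x+y-z}\rangle=0$, using that all the identifications are unitary and carry vacua to vacua. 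Since $z\le x$ was arbitrary, $a_x\in D_x$. Reversing the roles — splitting $a$ at $x+w$ for $w\le y$, working in $E_x\otimes E_w\otimes E_{y-w}$, and this time projecting the $E_x$-leg onto $\C\Omega_x$ (so that the term $a_x\otimes\Omega_w\otimes\Omega_{y-w}$ is killed by $a_x\perp\Omega_x$) — gives $a_y\in D_y$ in exactly the same fashion.

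I expect the only real obstacle to be notational bookkeeping: one has to be sure that the various associativity isomorphisms used to identify $E_{x+y}$ with $E_x\otimes E_y$, with $E_z\otimes E_{x+y-z}$, and with $E_z\otimes E_{x-z}\otimes E_y$ are mutually compatible and all send vacuum vectors to vacuum vectors. This is nothing more than the coherence already built into the notion of a product system together with $\Omega$ being a unit, so no genuine difficulty arises; one could alternatively avoid the triple tensor product entirely by phrasing the contraction as the map $E_{x+y}\to E_x$ dual to $\xi\mapsto\xi\otimes\Omega_y$, but the computation above seems the most transparent.
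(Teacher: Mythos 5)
Your proof is correct and follows essentially the same route as the paper: both compare the given $(x,y)$-splitting of $a$ with the $(z,\,x+y-z)$-splitting coming from $a\in D_{x+y}$ inside the triple tensor product $E_z\otimes E_{x-z}\otimes E_y$ and then isolate the component along $\Omega_y$ (you do this with the projection $1\otimes 1\otimes P_0$, the paper by rearranging the equality and noting the resulting vector lies in $E_{x-z}\otimes\C\Omega_y$ --- trivially equivalent steps). Your explicit treatment of the orthogonality of $d$ and of the symmetric case $a_y\in D_y$ is slightly more complete than the paper's, which leaves both implicit.
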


\begin{proof} Let $z\leq x$. Since $a \in D_x$, there exist $E_z\ni a_z\perp \Omega_z,  E_{x+y-z} \ni a_{x+y-z} \perp \Omega_{x+y-z} $, satisfying $U_{z, x+y-z}(a_z\otimes \Omega_{x+y-z}) + U_{z, x+y-z}(\Omega_{z} \otimes a_{x+y-z}) = a$. This combined with the decomposition in the statement of the lemma, imply that $$(a_x - (a_z \otimes \Omega_{x-z})) \otimes \Omega_y = \Omega_z \otimes (a_{x+y-z} - (\Omega_{x-z}\otimes a_y)).$$ This forces $a_{x+y-z} - (\Omega_{x-z}\otimes a_y)\in E_{x-z} \otimes \Omega_y$. So  there exists an $E_{x-z} \ni a_{x-z} \perp \Omega_{x-z}$ satisfying $U_{z, x-z}(a_z\otimes \Omega_{x-z}) + U_{z, x-z}(\Omega_{z} \otimes a_{x-z}) = a_x$.
\end{proof}

We will be assuming the following embeddability assumption, which is satisfied by both CCR flows and CAR flows.

\begin{defn}  A spatial product system is said to be embeddable if $$U_{x, y}(D_x\otimes \Omega_{y})\subseteq D_{x+y}, ~~U_{x, y}(\Omega_{x} \otimes D_{y}))\subseteq D_{x+y} \forall x,y \in P.$$
\end{defn}

For an embeddable product system,  the embeddings  $$\iota_{x,x+y}:D_x  \mapsto D_{x+y} \qquad \xi\mapsto U_{x,y}(\xi\otimes\Omega_{y})$$ allow us to construct an inductive limit of the family of Hilbert spaces $\{D_x\}_{x \in P}$, which we denote by $D_\infty$, together with embeddings  $\iota_x:D_x\to D_\infty$.  
Notice that each of the vector $\Omega_x$ is mapped to the same element, which we denote by $\Omega_\infty\in D_\infty$.
 We can also define a second family of embeddings
 $$\kappa_{x,y}:D_x\mapsto D_{x+y} \qquad \xi\mapsto U_{x,y}(\Omega_x\otimes\xi).$$ Thanks to the associativity axiom,  the squares
 $$ \xymatrix{ D_x \ar[r]^{\iota_{x,y}} \ar[d]_{\kappa_{z,x}} & D_{x+y} \ar[d]^{\kappa_{z,x+y}} \\ D_{z+x} \ar[r]_{\iota_{z+x,y}} & D_{z+x+y}} $$
 commute for all $z,x,y\in P$. So there exist isometries $(\kappa_x:D_\infty\mapsto D_\infty)_{x\in P}$, which defines an isometric representation $\kappa =\{\kappa_x: x \in P\}$ of $P$ on $D_\infty$, satisfying
 $\kappa_x\iota_y=\iota_{x+y}\kappa_{x,y}$.  
 
Any isomorphism between two spatial embeddable product systems, fixing the canonical unit, will map additive decomposable vectors to additive decomposable vectors. Consequently it will induce a unitary map between the Hilbert spaces constructed above conjugating the isometric representations. When the gauge cocycles acts transitively on the set of all units, we can replace the given isomorphism with an isomorphism which fixes the canonical units. In that case the tuple $(D_\infty, \kappa)$ is a cocycle conjugacy invariant for the associated $E_0-$semigroup.  This holds in particular for examples with  the canonical unit being the only unit up to scalars, which is the case in most of our examples.

\section{ The injectivity of CCR and CAR functors} Throughout this section we fix an isometric representation $V$ on $K$. Let $\alpha$ and $\beta$ be the CCR flow and CAR flow respectively, associated with $V$. 
As in Section \ref{pre}, we denote $E^s_x =\Gamma_s(\Ker(V_x^*))$ and $E^a_x =\Gamma_a(\Ker(V_x^*))$. 
We also just denote by $E_x =\Gamma(\Ker(V_x^*))$ referring to the product systems of both CCR and CAR, to make statements on both of them at the same time. Similarly the second quantization $\Gamma(V_x)$ denotes both $\Gamma_s(V_x)$ and $\Gamma_a(V_x)$.

\begin{prop}\label{H} For both $\alpha$ and $\beta$  we have $D_x(\alpha)=\Ker(V_x^*)$ and $D_x(\beta)=\Ker(V_x^*)$ embedded as the $1-$particle space in $E^s_x$ and $E^a_x$ respectively. 

Consequently the product systems of both $\alpha$ and $\beta$ are embeddable with the invariant $(D_\infty, \kappa)$ coinciding with $(K,V)$.
\end{prop}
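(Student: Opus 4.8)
The plan is to compute the space of additive decomposable vectors $D_x$ explicitly for both the symmetric and antisymmetric Fock spaces $E_x = \Gamma(\Ker(V_x^*))$, showing it equals the one-particle subspace, and then to read off the invariant $(D_\infty,\kappa)$ from the definitions in Section 3. The key observation is that an additive decomposability condition over a splitting $x = y + (x-y)$ becomes, under the Fock-space factorization $\Gamma(\Ker(V_{x}^*)) \cong \Gamma(\Ker(V_y^*)) \otimes \Gamma(\Ker(V_{x-y}^*))$ (induced by the product maps $U^s_{y,x-y}$, $U^a_{y,x-y}$), a statement that a vector in a tensor product of Fock spaces lies in $(\text{Fock}) \otimes \Omega + \Omega \otimes (\text{Fock})$, with both summands orthogonal to the total vacuum. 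I would first verify the easy inclusion: a one-particle vector $\xi \in \Ker(V_x^*)$ decomposes, for any $y \le x$, as $\xi = P_{\Ker(V_y^*)}\xi \oplus P_{V_y\Ker(V_{x-y}^*)}\xi$ inside the direct-sum decomposition $\Ker(V_x^*) = \Ker(V_y^*) \oplus V_y\Ker(V_{x-y}^*)$ (this orthogonal decomposition is exactly what underlies the product maps), and each summand is a one-particle vector orthogonal to the respective vacuum; hence $\Ker(V_x^*) \subseteq D_x$ for both flows.

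For the reverse inclusion I would argue that a decomposable vector can have components only in particle number $\le 1$. Fix $a \in D_x$ and expand it in the number grading $a = \sum_n a^{(n)}$ with $a^{(n)}$ the $n$-particle component. For a splitting $x = y + z$, the factorization identity and the requirement $a = U_{y,z}(a_y \otimes \Omega_z) + U_{y,z}(\Omega_y \otimes a_z)$ with $a_y \perp \Omega_y$, $a_z \perp \Omega_z$, forces, at the level of each fixed particle number $n \ge 2$, that $a^{(n)}$ decomposes as (something supported on $\Ker(V_y^*)^{\otimes n}$, or its antisymmetric analogue) plus (something supported on $(V_y\Ker(V_z^*))^{\otimes n}$), i.e. $a^{(n)}$ has no "mixed" terms straddling the two subspaces. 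Since this must hold for every intermediate $y$, and since $\Ker(V_y^*)$ exhausts $\Ker(V_x^*)$ as $y \uparrow x$ while $V_y\Ker(V_z^*) \to \{0\}$ (purity of $V$, $\bigcap_{t} V_{ta}K = \{0\}$), the only way an $n$-particle vector with $n \ge 2$ can avoid all mixed terms across all splittings is to be zero; similarly the $0$-particle component is killed by the $a_x \perp \Omega_x$ normalization built into the definition of $D_x$ (the decomposable vectors are required to be orthogonal to $\Omega$). This leaves $a = a^{(1)} \in \Ker(V_x^*)$.

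The main obstacle is making the "no mixed terms across all splittings forces vanishing" step rigorous and uniform in $n$, particularly controlling how the subspaces $\Ker(V_y^*)$ and $V_y\Ker(V_z^*)$ move as $y$ varies: one needs that for a fixed nonzero $n$-particle tensor, one can choose $y$ so that it genuinely has a component with legs in both $\Ker(V_y^*)$ and $V_y\Ker(V_{x-y}^*)$ simultaneously — this is where purity of the representation and the continuity/monotonicity of $y \mapsto \Ker(V_y^*)$ enter, and the antisymmetric case needs slightly more care with signs and with the Fock factorization formula \eqref{UCAR}. Once $D_x = \Ker(V_x^*)$ is established, the embeddings $\iota_{x,x+y}: D_x \to D_{x+y}$ become $\xi \mapsto \xi$ under the identification $\Ker(V_x^*) \hookrightarrow \Ker(V_{x+y}^*)$ as the first summand, so $D_\infty = \overline{\bigcup_x \Ker(V_x^*)} = K$ (using purity, which gives $\overline{\bigcup_x \Ker(V_x^*)} = K$), and $\kappa_{x,y}: \xi \mapsto V_x\xi$, so the induced representation $\kappa$ on $D_\infty$ is precisely $V$ on $K$. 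The embeddability of the product system is then immediate from $\Ker(V_x^*) \oplus V_x\Ker(V_y^*) = \Ker(V_{x+y}^*)$, which shows both $U_{x,y}(D_x \otimes \Omega_y)$ and $U_{x,y}(\Omega_x \otimes D_y)$ land inside $D_{x+y}$.
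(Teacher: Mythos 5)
Your overall architecture matches the paper's: verify $\Ker(V_x^*)\subseteq D_x$ via the orthogonal splitting $\Ker(V_x^*)=\Ker(V_y^*)\oplus V_y\Ker(V_{x-y}^*)$, observe that decomposability respects the particle-number grading, kill the $n$-particle sectors for $n\geq 2$, and then read off $(D_\infty,\kappa)=(K,V)$ using purity. The easy inclusion and the final identification of the invariant are fine. But the central step --- that an $n$-particle vector ($n\geq 2$) with no ``mixed terms'' for every splitting must vanish --- is exactly where you stop, and you say so yourself. As written this is a genuine gap, not a technicality: the heuristic ``$\Ker(V_y^*)$ exhausts $\Ker(V_x^*)$ while $V_y\Ker(V_{x-y}^*)\to\{0\}$'' does not by itself preclude a nonzero vector that is unmixed for every splitting (for a single splitting $y$ one only learns that $F$ is the sum of its projections onto $(\Ker V_y^*)^{\otimes n}$ and $(V_y\Ker V_{x-y}^*)^{\otimes n}$, which is consistent with $F\neq 0$), and purity is not the relevant hypothesis here --- purity is only used at the end to get $\overline{\bigcup_x\Ker(V_x^*)}=K$. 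What actually kills the higher sectors is the continuity (non-atomicity) of the increasing family $t\mapsto\Ker(V_{tx}^*)$.

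The paper closes this gap with three concrete moves you are missing. First, a vector decomposable in $E_x$ is in particular decomposable with respect to the one-parameter system $\{E_{tx}\}_{t\geq 0}$, so everything reduces to the one-parameter case along a ray. Second, a pure one-parameter semigroup of isometries may be taken, without loss of generality, to be the right shift $\{S_t\}$ on $L^2(\R_+,\k)$; this realization is essential, because it turns the abstract ``no mixed terms'' condition into a statement about supports of functions. Third, an $n$-particle vector becomes $F\in L^2((0,t)^n,\k^{\otimes n})$, and iterating Lemma \ref{ind-d} over dyadic subdivisions gives $F=\sum_{i=0}^{2^k-1}\Gamma(S_{2^{-k}it})F_i$ with $F_i$ supported in a cube of side $2^{-k}t$, so $\supp(F)\subseteq\bigcup_i[2^{-k}it,2^{-k}(i+1)t]^{\times n}$ for every $k$; letting $k\to\infty$ confines the support to the diagonal $\{x_1=\cdots=x_n\}$, which is Lebesgue-null for $n\geq 2$, whence $F=0$. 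Note also that Lemma \ref{ind-d} (the components $a_y$, $a_{x-y}$ are themselves decomposable) is what licenses the iteration; your sketch never invokes it. To complete your proof you would need to supply these steps, or an equivalent spectral-measure argument showing that the ``diagonal'' of the non-atomic filtration $\Ker(V_{tx}^*)$ carries no $n$-particle mass.
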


\begin{proof} It is clear from the product map of the product systems, that the  additive decomposable vectors  are those $\xi \in \Gamma(\Ker(V_{x+y}^*))$ for which there exists  $\xi_x \in \Gamma(\Ker(V_{x}^*))$  and $\xi_y \in \Gamma(\Ker(V_{y}^*))$ satisfying $\xi_{x+y} = \xi_x + \Gamma(V_x)\xi_y$ ($ \Gamma(\Ker(V_{x}^*))$ and  $\Gamma(\Ker(V_{y}^*))$ embedded into $\Gamma(\Ker(V_{x+y}^*))$ using $\Omega$).  Since $\Gamma(V_x)$ leaves the $n-$particle spaces invariant, we only need to look for $n-$particle vectors satisfying the additive decomposability condition. 

Since any $\xi \in \Ker(V_{x+y}^*)$ can be decomposed as $\xi= \xi_x + V_x \xi_y$ for some  $\xi_x \in  \Ker(V_{x})^*$ and $\xi_y \in  \Ker(V_{y})^*$, 
the $1-$particle space $\Ker(V_x^*)$ is contained in  $D_x(\alpha)$ and  $D_x(\beta)$. So we only have to prove there does not exist any non-zero  additive decomposable vectors in the $n-$particle spaces for $n >1$. Let us first show this for a $1-$parameter CCR/CAR flow.  

We can assume, without loss of generality, that the associated isometric representation is the standard right shift $\{S_t\}_{t\in \R_+}$ on $L^2(\R_+, \k)$.   
Let $n\geq 2$. A vector in the $n-$particle space of $\Gamma(L^2((0,t),\k))$ can be identified with a function 
$F  \in  L^2((0,t)^n, \k\opower{n})$. 
For any $k\in\N$, thanks to Lemma \ref{ind-d} and by induction,
$$F=\sum_{i=0}^{2^k-1} \Gamma(S_{2^{-k}it}) F_i$$ for some $F_i \in  L^2((0,2^{-k}t)^n, \k\opower{n})$.
Thus, modulo a null set,	
$$\supp( F) \subset \bigcup_{i=0}^{2^k-1}[2^{-k}it,2^{-k}(i+1)t]^{\times n}.$$
Since this holds for every $k\geq 0$ we get $\supp (F) \subset \{x\in\R_+^n:~x_1=x_2=\ldots=x_n\},$ which has measure zero.

Now if any $\xi \in E_x$ is decomposable, it is decomposable with respect to $\{E_{tx}\}_{t\geq 0}$, which implies that $\xi$ is in the $1-$particle space.  $1-$particle spaces are mapped into $1-$particle spaces under the product map of product system. Also since the isometric representation is pure, the inductive limit of $\Ker(V_x)^*$ is $K$ itself. The rest of the statements are clear now
\end{proof}

As mentioned in the end of Section \ref{pre} the units of the CCR flow $\alpha$  are indexed by the additive cocycles of $V$, up to scalars. But any additive cocycle $\{h_x: x \in P\}$ also gives rise to a gauge cocycle $\{W(h_x): x \in P\}$. So the gauge cocycles of CCR flows acts transitively on the units. Now the following corollary is immediate from the  above proposition and the discussion in the previous section.

\begin{cor}  The CCR flows associated with isometric representations $V^1$ and $V^2$ are cocycle conjugate if and only if $V^1$ and $V^2$ are conjugate. In that case the CCR flows are actually conjugate. 
\end{cor}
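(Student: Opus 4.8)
The plan is to read off both implications from Proposition~\ref{H}, from the discussion closing the previous section, and from the fact (\cite{murugan}) that the concrete product system is a complete invariant for cocycle conjugacy; once these are in hand the corollary is essentially immediate.

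For the ``if'' direction, suppose a unitary $U\colon K_1\to K_2$ conjugates $V^1$ and $V^2$, i.e.\ $UV^1_x=V^2_xU$ for all $x\in P$. I would second--quantize: $\Gamma_s(U)$ is a unitary $\Gamma_s(K_1)\to\Gamma_s(K_2)$, and I would check that $Ad(\Gamma_s(U))\circ\alpha^{V^1}_x\circ Ad(\Gamma_s(U))^{*}$ is an $E_0$-semigroup on $B(\Gamma_s(K_2))$ which sends $W(v)$ to $W(V^2_xv)$ for every $v\in K_2$, using the two identities $\Gamma(U)W(w)\Gamma(U)^{*}=W(Uw)$ and $\alpha^{V^1}_x(W(w))=W(V^1_xw)$. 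By the uniqueness clause in Example~\ref{ex-CCR-CAR} this perturbed semigroup must coincide with $\alpha^{V^2}$, so the two CCR flows are in fact conjugate; this is the stronger conclusion claimed in the last sentence of the corollary, and \emph{a fortiori} they are cocycle conjugate.

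For the ``only if'' direction, assume $\alpha^{V^1}$ and $\alpha^{V^2}$ are cocycle conjugate. First I would invoke \cite{murugan} to obtain an isomorphism of the associated product systems $E^{\alpha^{V^1}}\cong E^{\alpha^{V^2}}$. Both product systems are spatial, with canonical unit the vacuum, and by Proposition~\ref{H} both are embeddable. The isomorphism need not fix the canonical unit, but it carries the canonical unit of $\alpha^{V^1}$ to \emph{some} unit of $\alpha^{V^2}$; to fix this up I would use that the gauge cocycles of a CCR flow act transitively on its units — every additive cocycle $\{h_x\}\in\mathfrak{A}(V^2)$ yields a gauge cocycle $\{W(h_x)\}$ of $\alpha^{V^2}$, with the residual normalization absorbed by scalar gauge cocycles — so, as noted at the end of the previous section, composing with the product--system automorphism induced by a suitable gauge cocycle produces an isomorphism $\Theta$ with $\Theta_x\Omega^1_x=\Omega^2_x$ for all $x\in P$.

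Finally I would apply the invariance statement of the previous section: since additive decomposability and the connecting maps $\iota$ and $\kappa$ are defined purely in terms of the product maps and the canonical unit — all preserved by $\Theta$ — the isomorphism $\Theta$ maps additive decomposable vectors to additive decomposable vectors and descends to a unitary $D_\infty(\alpha^{V^1})\to D_\infty(\alpha^{V^2})$ intertwining $\kappa^{(1)}$ and $\kappa^{(2)}$. By Proposition~\ref{H} each $(D_\infty(\alpha^{V^i}),\kappa^{(i)})$ equals $(K_i,V^i)$, so $V^1$ and $V^2$ are conjugate, and then the ``if'' direction upgrades the conclusion to conjugacy of the flows themselves. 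The one step requiring genuine care is the normalization: one must make sure the gauge cocycles of a CCR flow really act transitively on \emph{all} units, not only the exponential ones parametrized by $\mathfrak{A}(V^2)$, since only then does the cocycle--conjugacy invariance of $(D_\infty,\kappa)$ established in the previous section apply verbatim. Everything else is bookkeeping over the structure already assembled.
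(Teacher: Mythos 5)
Your proposal is correct and follows essentially the same route as the paper: the ``only if'' direction combines the completeness of the product-system invariant, the transitivity of gauge cocycles on units (via additive cocycles $h\mapsto\{W(h_x)\}$, which the paper notes just before the corollary), and the identification $(D_\infty,\kappa)=(K,V)$ from Proposition~\ref{H}, while the ``if'' direction is second quantization. The paper leaves all of this as ``immediate from the above proposition and the discussion in the previous section,'' so your write-up is simply a fuller version of the intended argument, including the correct observation that transitivity must cover all units and not just the exponential ones.
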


The proof for CAR flows is slightly involved. In the next proposition we show that units of CAR flows also arise from additive cocycles.  We use the bijection between units and centered addits  established in  \cite[Section 5]{MS1}, for $1-$parameter systems.  (Notice,  in the case of CCR/CAR, that the  centered addits are just the additive cocycles of $V$ sitting in the $1-$particle space.)  We cite precise theorems and refer the reader to \cite{MS1}.

\begin{prop}\label{CAR-unit}  There exists an injective map $\Log_\Omega:  \mathfrak{U}_\Omega(\beta) \mapsto \mathfrak{A}(V)$ satisfying  $$ \ip{u_x, u'_x} = e^{\ip{\Log_\Omega(u)_x,\Log_\Omega(u')_x}}~~~\forall x \in P.$$
\end{prop}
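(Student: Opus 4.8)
The plan is to let $\Log_\Omega(u)_x$ be the one-particle component of $u_x$ inside $E^a_x=\Gamma_a(\Ker(V_x^*))$, and then to establish the three required facts---that this lands in $\mathfrak{A}(V)$, that the inner-product identity holds, and that the assignment is injective---by combining a direct grading computation in the CAR product system with the one-parameter results of \cite{MS1}. First I would record the action of the product maps on low particle numbers: under the identification $\Ker(V_{x+y}^*)=\Ker(V_x^*)\oplus V_x\Ker(V_y^*)$ already used in the proof of Proposition~\ref{H}, the map $U^a_{x,y}$ preserves the particle grading, sends $\Omega^a_x\otimes\Omega^a_y$ to $\Omega^a_{x+y}$, and on one-particle vectors acts by $\xi\otimes\Omega^a_y\mapsto\xi$ and $\Omega^a_x\otimes\eta\mapsto V_x\eta$. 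Writing $u_x=\Omega^a_x+h_x+(\text{particle number}\geq 2)$, where $h_x\in\Ker(V_x^*)$ is the one-particle part and the vacuum coefficient is $1$ because $u$ is an exponential unit, the relation $U^a_{x,y}(u_x\otimes u_y)=u_{x+y}$ gives, reading off the one-particle level, $h_{x+y}=h_x+V_xh_y$; every remaining product of components (such as $h_x\otimes h_y$) contributes only to particle number $\geq 2$. Since $\Ker(V_x^*)\subseteq K$, continuity of $x\mapsto h_x$ is inherited from the standing continuity assumption on $u$ through the bounded one-particle projection, so $\Log_\Omega(u):=\{h_x\}_{x\in P}\in\mathfrak{A}(V)$.

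Next I would prove the inner-product identity. By continuity of both sides---the right-hand side since additive cocycles are continuous---it suffices to treat $x\in\Omega$. For such an $x$, the family $t\mapsto V_{tx}$ is a pure one-parameter isometric representation on $K$; its associated one-parameter CAR flow has product system $\{\Gamma_a(\Ker(V_{tx}^*))\}_{t\geq 0}$, which is exactly the restriction of the product system of $\beta$ to the ray $\R_+x$, and $\{u_{tx}\}_{t\geq 0}$, $\{u'_{tx}\}_{t\geq 0}$ are exponential units of it. By the computation above, their one-particle parts $\{h_{tx}\}_{t\geq 0}$ and $\{h'_{tx}\}_{t\geq 0}$ are precisely the centered addits attached to these two units in the sense of \cite[Section~5]{MS1}. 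Invoking \cite{MS1}, for a one-parameter CAR flow the inner product of two exponential units is the exponential of the inner product of their centered addits; evaluating at $t=1$ yields $\ip{u_x,u'_x}=e^{\ip{h_x,h'_x}}$.

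Injectivity then follows easily: if $\Log_\Omega(u)=\Log_\Omega(u')$, then $h_x=h'_x$ for every $x\in P$, and expanding $\norm{u_x-u'_x}^2$ and substituting the identity just proved makes each of the four resulting terms equal to $e^{\norm{h_x}^2}$, so $\norm{u_x-u'_x}=0$ for all $x$, whence $u=u'$. (One could equally invoke the injectivity half of the one-parameter bijection of \cite{MS1} on each ray $\R_+x$ with $x\in\Omega$ and then use density of $\Omega$ in $P$.)

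The hard part is concentrated entirely in the appeal to \cite{MS1}: one must be sure that the ``logarithm'' of a unit used there agrees, in the CAR setting, with the one-particle component, so that the globally defined $\Log_\Omega$ is consistent with the one-parameter theory on every ray, and that the inner-product formula for one-parameter CAR flows---the genuinely nontrivial analytic input, since units of CAR flows are not exponential vectors---is available in exactly this form. By contrast, the passage from the cone $P$ to one-parameter rays, the identification of the restricted product system, and the persistence of the exponential-unit normalization under restriction are all routine and need only be spelled out.
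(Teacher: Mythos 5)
Your proposal is correct and follows essentially the same route as the paper: both define $\Log_\Omega(u)_x$ as the one-particle component of $u_x$, identify it along each ray $\{tx\}_{t>0}$ with the centered addit of \cite{MS1} to import the inner-product formula $\ip{u_x,u'_x}=e^{\ip{h_x,h'_x}}$, and derive the two-variable cocycle identity from $U^a_{x,y}(u_x\otimes u_y)=u_{x+y}$ at the one-particle level (the paper just presents these steps in the opposite order, first invoking \cite[Proposition 5.10]{MS1} and then checking via the iterated It\^o expansion that the addit is the one-particle projection). Your injectivity argument by expanding $\norm{u_x-u'_x}^2$ is a slightly more direct substitute for the paper's appeal to the $\Exp$ map, but the content is the same.
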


\begin{proof}
Given a unit $u \in  \mathfrak{U}_\Omega(\beta)$, we fix an arbitrary $x \in P$, and consider $\{u_{tx}\}_{t\in \R_+}$ a unit for $\{\beta_{tx}\}_{t\in \R_+}$. Then, thanks to  \cite[Proposition 5.10]{MS1} there exists an additive cocycle $\{\Log_\Omega(u)_{tx}\}_{t\in \R_+}$ of $\{V_{tx}\}_{t\in\R_+}$. By setting $t=1$, the family $\{\Log_\Omega(u)_x: x \in P\}$ is well-defined for each $u \in \mathfrak{U}_\Omega(\beta)$, and  for $u, u' \in \mathfrak{U}_\Omega(\beta)$ it satisfies $ \ip{u_x, u'_x} = e^{\ip{\Log_\Omega(u)_x,\Log_\Omega(u')_x}}$ for all $x \in P$ (see \cite[Proposition  5.9, 5.10, 5.11]{MS1}).  If $\Log_\Omega(u) = \Log_\Omega(u')$ then by applying the $\Exp$ map  (see \cite[Propositions 5.9, 5.11]{MS1}) on  $\{\Log_\Omega(u)_{tx}\}_{t\in \R_+}$   and $\{\Log_\Omega(u')_{tx}\}_{t\in \R_+}$ we get $u_x=u'_x$ for all $x\in P$.  We are only left to prove $\{\Log_\Omega(u)_x: x \in P\}$ is an additive cocycle for $V$.

The unit $u$ admits an orthogonal summation $u_x = \Omega_x +\sum_{n=1}^\infty A^n_x$, where $A^1_x = \Log_\Omega(u)_x$ and $A^{n+1}_x =\int_0^1 A^n_{tx} d\Log_\Omega(u)_{tx}$ obtained by iterating the  It\^o integration (see the first paragraph of proof of \cite[Proposition 5.9]{MS1}).   It is clear from the definition of  It\^o integration  in  the beginning of \cite[Section 5]{MS1},  that $A^n_x$ belongs to the $n-$particle space. So it follows that $ \Log_\Omega(u)_x$ is projection of $u_x$ onto the $1-$particle space. Now the relation $u_x\otimes u_y=u_{x+y}$ for all $x,y \in P$ implies that $ \Log_\Omega(u)_x$ satisfies the additive cocycle condition in $x,y$.
\end{proof}

Unlike the CCR flows, an arbitrary additive cocycle of $V$ need not give rise to a unit for the CAR flows, as remarked in  \ref{nounit}

\begin{defn}
An isometric representation $V$ is said to be divisible if  $$\overline{span} \{V_x h_y: h \in  \mathfrak{A}(V), x,y \leq z\} =\Ker(V_{z}^*) ~~\forall  z\in P.$$ 
\end{defn}

\begin{rems}\label{nounit}
Since the set of units of a CCR flow are in bijection with additive cocycles, a CCR flow is of type I if and only if the associated isometric representation is divisible. 

A basic family of  examples of divisible isometric representations, when $P =\R_+^2$, is $V_{s,t} =S_s\oplus S_t$ on $L^2((0,\infty),\k_1)\oplus L^2((0,\infty),\k_2)$. In this case the additive cocycles are  $$h_{s,t} =(1_{0,s}\otimes \xi_1) \oplus  (1_{0,t}\otimes \xi_2), ~~\xi_1 \in \k_1, \xi_2 \in \k_2.$$ It is easy to check that $V$ is divisible, and hence the CCR flow $\alpha$ is of type I.
 On the other hand the corresponding CAR flow $\beta$ is not type I, which follows from the following Proposition \ref{I} and the non-conjugacy proved in Proposition \ref{non-conj}. 
 \end{rems}

\begin{prop}\label{I}
If the CAR flow $\beta$ is type I then it is cocycle conjugate to the CCR flow $\alpha$ of the same isometric representation.
\end{prop}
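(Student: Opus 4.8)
The plan is to compare the two type~I product systems by showing they have the same units and the same "unit space," and then to invoke the classification of type~I product systems over $P$: a spatial, type~I product system is determined up to isomorphism (fixing the canonical unit) by the Hilbert space of its centered units together with the natural isometric representation of $P$ on the inductive limit. For the CCR flow $\alpha$ this unit space is $\mathfrak{A}(V)$, with inductive limit $(K, V)$ after identifying centered addits with $1$-particle additive cocycles. For the CAR flow $\beta$, Proposition~\ref{CAR-unit} gives an injective map $\Log_\Omega: \mathfrak{U}_\Omega(\beta) \to \mathfrak{A}(V)$ intertwining the covariance functionals, namely $\ip{u_x,u'_x}=e^{\ip{\Log_\Omega(u)_x, \Log_\Omega(u')_x}}$; so the first task is to upgrade this to a \emph{bijection} under the type~I hypothesis.

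First I would show $\Log_\Omega$ is onto $\mathfrak{A}(V)$ when $\beta$ is type~I. By Proposition~\ref{H}, $D_x(\beta)=\Ker(V_x^*)$ is exactly the $1$-particle space, and the proof of Proposition~\ref{CAR-unit} identifies $\Log_\Omega(u)_x$ as the projection of $u_x$ onto that $1$-particle space. If $\beta$ is type~I then the units generate the product system; in particular the closed span of all $\Log_\Omega(u)_x$, $u\in\mathfrak{U}_\Omega(\beta)$, must be all of $D_x(\beta)=\Ker(V_x^*)$ (products of units only contribute to the $1$-particle space through single factors, the rest being higher-particle). Combined with the additive-cocycle relation this forces $V$ to be divisible, and then $\Log_\Omega(\mathfrak{U}_\Omega(\beta))$ is a closed subspace of $\mathfrak{A}(V)$ whose span fills $\Ker(V_x^*)$ at every level $x$; a dimension/closedness argument at each truncation shows it equals $\mathfrak{A}(V)$. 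Hence $\Log_\Omega$ is a bijection $\mathfrak{U}_\Omega(\beta)\to\mathfrak{A}(V)$ intertwining the covariance data, so the centered-unit Hilbert spaces of $\alpha$ and $\beta$ are isometrically identified, compatibly with the embeddings $\iota_{x,y}$, hence with the inductive limits $(K,V)$ on both sides.

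Next I would use this identification to build the cocycle conjugacy. Since both $\alpha$ and $\beta$ are type~I with the same unit space, the type~I classification yields an isomorphism of product systems $W=\{W_x: E^a_x\to E^s_x\}$ carrying the canonical unit of $\beta$ to that of $\alpha$ and each CAR unit $u$ to the CCR unit $e(\Log_\Omega(u))$; concretely, $W_x$ is the unitary $\Gamma_a(\Ker(V_x^*))\to\Gamma_s(\Ker(V_x^*))$ that is the identity on the ($1$-particle) decomposable part and is determined on the generating products of exponential units. Because concrete product systems are a complete cocycle-conjugacy invariant (as recalled in Section~\ref{pre}), an isomorphism of the product systems of $\beta$ and $\alpha$ yields that $\beta$ and $\alpha$ are cocycle conjugate, which is the assertion.

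The main obstacle I anticipate is the surjectivity step, i.e.\ going from "$\beta$ is type~I" to "$\Log_\Omega$ is onto $\mathfrak{A}(V)$": one must be careful that generation of the product system by units really forces every additive cocycle of $V$ to be exponentiable to a unit of $\beta$, using that $D_x(\beta)$ is precisely the $1$-particle space and that the $\Exp$ map of \cite{MS1} (which reconstructs a unit from a centered addit) is available for the relevant $1$-parameter restrictions $\{V_{tx}\}$ and patches consistently in $x$. Once the unit spaces are matched, invoking the type~I classification and the completeness of the product-system invariant is routine.
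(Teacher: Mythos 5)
Your plan hinges on first upgrading $\Log_\Omega$ to a \emph{bijection} onto $\mathfrak{A}(V)$, and this is a genuine gap. What the type~I hypothesis actually gives you (by projecting products of units onto the $1$-particle space, which by Proposition~\ref{H} is exactly $D_z(\beta)=\Ker(V_z^*)$) is that the set $\{V_x\Log_\Omega(u)_y : u\in\mathfrak{U}_\Omega(\beta),\ x+y\leq z\}$ is total in $\Ker(V_z^*)$ for every $z$ --- i.e.\ the family $\mathfrak{A}_0=\Log_\Omega(\mathfrak{U}_\Omega(\beta))$ is ``divisible.'' That is strictly weaker than $\mathfrak{A}_0=\mathfrak{A}(V)$: a priori $\mathfrak{A}_0$ need not even be a linear subspace of $\mathfrak{A}(V)$, and a proper subfamily can perfectly well have total evaluations at every level, so your ``dimension/closedness argument at each truncation'' does not close. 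Remark~\ref{nounit} in the paper should make you wary here: an arbitrary additive cocycle of $V$ need \emph{not} exponentiate to a CAR unit, so surjectivity of $\Log_\Omega$ is exactly the delicate point, and deducing it before you have the conjugacy risks circularity (it is a consequence of the conclusion, not an available hypothesis). A second, related problem is that you invoke ``the classification of type~I product systems over $P$ by their unit space''; no such classification over a general cone $P$ is stated or proved in this paper, so you are leaning on machinery you would first have to build.

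The fix is that surjectivity is simply not needed. The paper's proof works only with the units that actually exist: define the map directly on generating products by
$$u^1_{x_1}u^2_{x_2}\cdots u^n_{x_n}\ \mapsto\ e(\Log_\Omega(u^1)_{x_1})\,e(\Log_\Omega(u^2)_{x_2})\cdots e(\Log_\Omega(u^n)_{x_n}).$$
Proposition~\ref{CAR-unit} makes this inner-product preserving; type~I makes the domain side total in $\Gamma_a(\Ker(V_x^*))$; and the divisibility of the family $\mathfrak{A}_0$ (the totality statement above, which \emph{is} what type~I gives you) makes the range side total in $\Gamma_s(\Ker(V_x^*))$, since exponentials over the resulting dense set of $1$-particle vectors are total in the symmetric Fock space. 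The map therefore extends to unitaries $W_x$ which are multiplicative by construction, giving an isomorphism of product systems and hence cocycle conjugacy --- no classification theorem and no identification of $\mathfrak{A}_0$ with all of $\mathfrak{A}(V)$ required. Your second paragraph essentially describes this map already; you should make it the whole proof and drop the surjectivity step.
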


\begin{proof}
Since the CAR flow is of type I, the set $\{V_x \Log_\Omega(u)_y: u \in  \mathfrak{U}_\Omega(\beta), x+y \leq z\}$  is total in $\Ker(V_z^*)$ for all $z \in P$. If not, since the units are obtained by the $\Exp$ map of \cite[Proposition 5.9]{MS1}, all of them are contained in the product system generated by $\{V_x \Log_\Omega(u)_y: u \in  \mathfrak{U}_\Omega(\beta), x+y \leq z\}$ which will be  a proper subspace of $\Gamma_a(\Ker(V_z^*))$. So the units can not generate the product system of $\beta$.

Define a map from $\Gamma_a(\Ker(V_{x}^*))$ to $\Gamma_s(\Ker(V_{x}^*))$  by 
$$u^1_{x_1} u^2_{x_2} \cdots u^n_{x_n} \mapsto  e(\Log_\Omega(u^1)_{x_1})  e(\Log_\Omega(u^2)_{x_2} ) \cdots  e(\Log_\Omega(u^n)_{x_n}).$$ 
Thanks to Proposition \ref{CAR-unit} this map preserves inner product. Also thanks to type I property of $\beta$ and the divisibility of $V$, this map maps total set of vectors onto a total set of vectors. Hence it extends to an unitary map, which can easily be seen to provide an isomorphism between the product systems. 
\end{proof}
 
Now we prove the injectivity of the CAR functor as a corollary to all the propositions above.

\begin{prop}
The CAR flows associated with isometric representations $V^1$ and $V^2$ are cocycle conjugate if and only if $V^1$ and $V^2$ are conjugate. In that case the CAR flows are actually conjugate.
\end{prop}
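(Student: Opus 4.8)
The plan is to imitate the proof of the CCR Corollary almost word for word: from a cocycle conjugacy extract an isomorphism of the associated product systems, modify it so that it respects the canonical units, and then feed it into Proposition~\ref{H} together with the discussion at the end of Section~3. The one extra ingredient needed is that the gauge cocycles of a CAR flow act transitively on its units, and Proposition~\ref{CAR-unit} is what makes this accessible.

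The implication ``$V^1$ conjugate to $V^2$ $\Rightarrow$ CAR flows conjugate'' is immediate: if $\theta:K^1\to K^2$ is a unitary with $\theta V^1_x=V^2_x\theta$ for all $x\in P$, then the Fermionic second quantisation $\Gamma_a(\theta)$ is a unitary with $\Gamma_a(\theta)\,a(u)\,\Gamma_a(\theta)^*=a(\theta u)$, so $Ad(\Gamma_a(\theta))\circ\beta^{V^1}_x\circ Ad(\Gamma_a(\theta))^*=\beta^{V^2}_x$ for every $x$ by the defining relation $\beta^V_x(a(u))=a(V_xu)$.

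For the converse, a cocycle conjugacy between $\beta^{V^1}$ and $\beta^{V^2}$ gives an isomorphism $W=\{W_x\}$ of the product systems $(E^a_x(V^1),U^a_{x,y})$ and $(E^a_x(V^2),U^a_{x,y})$ which carries units to units; in particular $u:=W(\Omega^1)$ is a unit of $\beta^{V^2}$. A short continuity argument shows $\ip{u_x,\Omega^2_x}>0$ for each $x$, so after rescaling $u$ by a character we may assume it is an exponential unit, and Proposition~\ref{CAR-unit} yields $h:=\Log_\Omega(u)\in\mathfrak{A}(V^2)$. The next step is to produce a gauge cocycle $\{G_x\}$ of $\beta^{V^2}$ sending the vacuum unit to a scalar multiple of $u$, i.e. a family of unitaries on the fibres $\Gamma_a(\Ker((V^2_x)^*))$, compatible with the product maps, carrying $\Omega^2_x$ to a multiple of the Fermionic It\^o exponential $\Exp(h)_x=u_x$. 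Composing $W$ with $\{G_x\}^{-1}$ and a character then produces an isomorphism $W'$ of product systems fixing the canonical units. As in the general discussion of Section~3, $W'$ must carry additive decomposable vectors to additive decomposable vectors and intertwine the isometric representations $\kappa$; since by Proposition~\ref{H} these are exactly $(K^1,V^1)$ and $(K^2,V^2)$, we conclude $V^1$ is conjugate to $V^2$, and then, by the first implication, the two CAR flows are conjugate.

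The main obstacle is the construction of the gauge cocycle $\{G_x\}$ out of the additive cocycle $h$. For CCR flows this costs nothing --- the Weyl operators $\{W(h_x)\}$ are a gauge cocycle for \emph{every} $h\in\mathfrak{A}(V)$ --- but for CAR flows it fails: the naive Fermionic displacement $\exp(a^*(h_x)-a(h_x))$ does not send the vacuum to a unit, which is exactly the reason why not every additive cocycle of $V$ arises from a unit of $\beta^V$ (cf. Remark~\ref{nounit}). What rescues the argument is that here $h$ does come from a genuine unit: along each ray $t\mapsto tx$ the required cocycle of unitaries is the Fermionic Weyl (quantum stochastic) cocycle of \cite{MS1}, whose action on the vacuum is $\Exp(h)$, and the compatibility of these raywise cocycles across different directions is forced by the multiplicativity $u_xu_y=u_{x+y}$ of the unit itself. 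Checking that the resulting family genuinely lies in the gauge group, i.e. $G_x\in\beta^{V^2}_x(B(H))'$ and $G_x\beta^{V^2}_x(G_y)=G_{x+y}$, is the delicate point and is where the extra effort goes.
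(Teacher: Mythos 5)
Your reduction of the statement to the transitivity of the gauge group on the set of units is exactly the paper's opening move, and your easy direction via $\Gamma_a(\theta)$ is the same as well. But the crux --- actually producing a gauge cocycle of $\beta^{V^2}$ that carries the vacuum unit to a prescribed unit $u$ --- is precisely what you leave unproven. Your proposed route (raywise Fermionic quantum stochastic cocycles from \cite{MS1}, with compatibility across different directions ``forced by $u_xu_y=u_{x+y}$'') is a sketch, not an argument: the multiplicativity of $u$ is an identity between vectors in the product system, and it does not by itself yield the operator identities $G_x\in\beta_x(B(H))'$ and $G_x\beta_x(G_y)=G_{x+y}$ for a family of unitaries that you have only pinned down by their action on the vacuum. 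As you yourself observe, the naive Fermionic analogue of the Weyl displacement fails, and this failure is the entire difficulty of the CAR case; conceding that the delicate point ``is where the extra effort goes'' leaves the proof incomplete at its decisive step.

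The paper closes this gap by a detour you do not take. Set $\mathfrak{A}_0=\{\Log_\Omega(u):u\in\mathfrak{U}_\Omega(\beta)\}$ and $K_0=\overline{span}\{V_xh_y:h\in\mathfrak{A}_0,\ x,y\in P\}$. Then $V$ restricts to an isometric representation $V^0$ on $K_0$, every unit of $\beta$ lives in $\Gamma_a(K_0)$ under the factorization $\Gamma_a(K)\cong\Gamma_a(K_0)\otimes\Gamma_a(K_0^\perp)$, and the CAR flow $\beta^0$ of $V^0$ is type I by construction, hence cocycle conjugate to the CCR flow $\alpha^0$ of $V^0$ by Proposition \ref{I}. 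On the CCR side the Weyl operators $\{W(\Log_\Omega(u)_x)\}$ \emph{are} a genuine gauge cocycle moving the vacuum unit to the corresponding exponential unit; transporting them through the cocycle conjugacy produces the desired gauge cocycle $\tilde U^u$ of $\beta^0$, which is then extended to $\beta$ by tensoring with the identity on $\Gamma_a(K_0^\perp)$ (the verification rests on the intertwining of $\beta$ with $\beta^0\otimes 1$ under the factorization). Without this reduction through Proposition \ref{I} --- or some equally concrete construction of your $\{G_x\}$ --- the argument does not go through.
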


\begin{proof} Thanks to the discussions in the previous section and Proposition \ref{H}, we only have to prove that the gauge cocycles act transitively on the set of units of a CAR flow $\beta$ associated with an isometric representation $V$. Define $$ \mathfrak{A}_0 = \{\Log_\Omega(u):  u \in \mathfrak{U}_\Omega(\beta) \}\subseteq  \mathfrak{A}(V); ~~~K_0 = \overline{span}\{V_x h_y: h \in  \mathfrak{A}_0, x,y \in P\}\subseteq K.$$ 
Define $W: \Gamma_a(K_0)\otimes \Gamma_a(K_0^\perp) \mapsto \Gamma_a(K)$ as the unitary extension of 
$$ (\xi_1 \wedge\xi_2  \cdots \wedge \xi_{m}) \otimes (\eta_1 \wedge\eta_2   \cdots \wedge \eta_{n}) \mapsto    \xi_1 \wedge\xi_2   \cdots \wedge \xi_{m} \wedge  \eta_1 \wedge  \eta_2 \cdots \wedge  \eta_{n}$$ for $\xi_i \in \Ker(K_0)$, $\eta_j \in K_0^\perp$, $i\in [m], j \in [n]$. 

It can be easily seen that $V$ restricts to an isometric representation on $K_0$, which we denote by $V_0$. Denote the CCR flow and CAR flow associated with $V^0$ by $\alpha^0$ and $\beta^0$ respectively.  Since the units of $\beta$ can be  constructed from the additive cocycles in $ \mathfrak{A}_0 $ through the $\Exp$ map in  \cite[Proposition 5.10]{MS1}, it is easy to see that the units of $\beta$ are indeed in $\Gamma_a(K_0)$ as a subspace of $\Gamma_a(K)$. So any unit of $\beta$ is of the form $W(u_0\otimes \Omega)$, where $u_0$ is a unit for $\beta^0$. Conversely for any unit $u_0$ of  $\beta^0$, $W(u_0\otimes \Omega)$ is a unit for $\beta$. (Here $\Omega$ is the vacuum vector in $\Gamma_a(K^\perp)$.)

Since $\beta^0$ is type I, it is  cocycle conjugate to $\alpha^0$, thanks to Proposition \ref{I}.   Given a unit $u_0 \in   \mathfrak{U}_\Omega(\beta^0)$, with $u =W(u_0\otimes \Omega) \in  \mathfrak{U}_\Omega(\beta)$, we can choose the gauge cocycle $\{W(\Log_\Omega(u)_x): x \in P\}$ of $\alpha^0$, which maps the vacuum unit of $\alpha^0$ to  (scalar multiples)  of $\{e(\Log_\Omega(u)_x): x \in P\}$.  Now using the cocycle conjugacy, we get a gauge cocycle $\tilde{U}^u$ of $\beta^0$, which maps the vacuum unit of $\beta^0$ to $u_0$. Now define $U^u_x =  W(\tilde{U}^u_x \otimes 1)W^*$. We claim $U^u = \{U^u_x: x \in P\}$ is the required gauge cocycle of $\beta$ which maps $\Omega_a$ to $u$.

We first claim that $\alpha_x(W(T\otimes 1)W^*) = W(\alpha^0_x(T) \otimes 1)W^*$ for all $T \in B(\Gamma_a(K_0))$. It is easy to see $W(a(f)\otimes 1)W^* = a(f)$ for  all $f \in K_0$. Hence $$ \alpha_x(a(f))=a(Vf) =a(V^0f) =W(a(V^0 f)\otimes 1)W^* =W(\alpha^0_x(a(f))\otimes 1)W^*~~ \forall f \in K_0.$$ Since $\{a(f): f  \in K_0\}$ generates $B(\Gamma_a(K_0))$, the claim follows. Now \begin{align*} &  U^u_x\alpha_x(U^u_y)  =  W(\tilde{U}^u_x \otimes 1)W^*~ \alpha_x (W(\tilde{U}^u_y \otimes 1)W^*)
 =  W(\tilde{U}^u_x \otimes 1)W^*~ W(\alpha^0_x (\tilde{U}^u_y) \otimes 1)W^*\\
 &  =  W((\tilde{U}^u_x (\alpha^0_x( \tilde{U}^u_y)) \otimes 1 )W^*
 = W(\tilde{U}^u_{x+y} \otimes 1)W^* = U^u_{x+y}
\end{align*}
The proof is over
\end{proof}

\section{Non-cocycle-conjugacy between CCR flows and CAR flows}

We assume a technical condition on the isometric representation $V$ and we prove that the associated CCR flow is not cocycle conjugate to the associated CAR flow. For $x \in P$ we set $K_x =\Ker(V_x^*)$ and use this notation hereafter. 

\begin{defn} An isometric representation $V$ of $P$ is called proper if there exists $x,y \in P$ satisfying \begin{align} \label{xy}  V_x(K_y) \cap (V_x(K_y)\cap V_y(K_x))^\perp \neq \{0\}; V_y(K_x) \cap (V_x(K_y)\cap V_y(K_x))^\perp  \neq \{0\}\\ (V_x(K_y) \cap (V_x(K_y)\cap V_y(K_x))^\perp ) ~ \perp ~ (V_y(K_x) \cap (V_x(K_y)\cap V_y(K_x))^\perp )\nonumber  \end{align}
\end{defn}

Notice that $1-$parameter isometric representations can not satisfy the conditions (\ref{xy}).  Let  $P =\R_+^2$, $V_{s,t} =S_s\oplus S_t$ on $L^2((0,\infty),\k_1)\oplus L^2((0,\infty),\k_2)$. In this basic example it is a routine verification to check that the above conditions are satisfied for any $x=(s,t)$ and $y =(s',t')$ with $s\leq s'$ and $t\geq t'$. 

A vast family of examples of isometric representations are constructed as follows. Let $A \subseteq \R^{d}$ be a nonempty closed subset. Let $A$ be a $P$-module, that is $A+P \subseteq A$.   Let $K:= L^{2}(A,\k)$. For $x \in P$. The  isometric representation $S^A= \{S^A_{x}\}_{x \in P}$ is  defined by \begin{equation}
S^A_{x}(f)(y):=\begin{cases}
 f(y-x)  & \mbox{ if
} y -x \in A,\cr
   &\cr
    0 &  \mbox{ if } y-x \notin A.
         \end{cases}
\end{equation}
The conditions (\ref{xy}) means the existence of $x,y\in P$ satisfying \begin{align*} & \left((A+x+y)^c\cap (A+x)\right) \setminus \left((A+x+y)^c\cap (A+y)\right)\neq \emptyset\\ &   \left((A+x+y)^c\cap (A+y)\right) \setminus \left((A+x+y)^c\cap (A+x)\right)\neq \emptyset. \end{align*}  
 When $A$ is a region  bounded below and in the left, and unbounded in the right and above, the above conditions  (\ref{xy}) can be seen to hold.  
 
 \begin{prop}\label{non-conj} Let $V$ be an isometric representation satisfying conditions $(\ref{xy})$. Then the associated CCR flow $\alpha$  is not cocycle conjugate to the CAR flow $\beta$. 
 \end{prop}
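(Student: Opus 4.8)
The plan is to distinguish $\alpha$ and $\beta$ by computing, in each case, the space $D_2(\alpha)$ (resp. $D_2(\beta)$) of additive decomposable vectors at a suitable point, together with a finer invariant that records more than just the pair $(D_\infty,\kappa)$ constructed in Section 3. By Proposition \ref{H} the decomposable vectors of both flows sit inside the $1$-particle space and realize $(D_\infty,\kappa)=(K,V)$, so that coarse invariant alone cannot separate them. The point of condition \eqref{xy} is that it forces a genuine difference at the level of the ``second order'' decomposable structure: in the Bosonic (CCR) case a product of two commuting Weyl-type contributions lands in a symmetric $2$-particle space, whereas in the Fermionic (CAR) case the antisymmetry kills exactly the diagonal piece, and \eqref{xy} is precisely the requirement that there exist orthogonal one-particle directions $V_x(K_y)\cap(\cdots)^\perp$ and $V_y(K_x)\cap(\cdots)^\perp$ witnessing this discrepancy.

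**Key steps.** First I would fix $x,y\in P$ as in \eqref{xy}, set $z=x+y$, and work inside $E_z=\Gamma(K_z)$. Using the product maps $U^s_{x,y}$, $U^a_{x,y}$ recalled in Section \ref{pre}, I would identify the image of $E_x\otimes E_y$ inside $E_z$ explicitly: for CCR a product $e(\xi_x)e(\eta_y)$ maps to an exponential-type vector whose $2$-particle component contains the symmetric product $\xi_x\vee V_x\eta_y'$ (with $\eta_y'$ the relevant piece of $\eta_y$), while for CAR the $2$-particle component is the antisymmetric wedge $\xi_x\wedge V_x\eta_y'$. The second step is to exhibit, using the first and third displayed conditions in \eqref{xy}, nonzero vectors $p\in V_x(K_y)\cap(V_x(K_y)\cap V_y(K_x))^\perp$ and $q\in V_y(K_x)\cap(V_x(K_y)\cap V_y(K_x))^\perp$ with $p\perp q$, and to check that $p\vee q$ (symmetric) is attainable as (the $2$-particle part of) a product $u_x u_y$ with $u_x\in E_x$, $u_y\in E_y$, so this symmetric tensor is a legitimate ``second-order decomposable'' vector for $\alpha$. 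The third step is to show that no such configuration can occur for $\beta$: antisymmetry forces $p\wedge q$, and one argues that $p\wedge q$ cannot be written as a product coming from $E_x\otimes E_y$ in a way compatible with the required orthogonality to $\Omega$, because the only decomposable contributions in the CAR $2$-particle space come with built-in antisymmetrization that, combined with $p\perp q$ and the orthogonality conditions in \eqref{xy}, produces a vector of the ``wrong'' type (a genuine wedge that is \emph{not} of the form needed). The final step is to package this as a cocycle-conjugacy invariant: since gauge cocycles of both flows act transitively on units (Proposition \ref{CAR-unit} and the discussion before Corollary after Proposition \ref{H}), any cocycle conjugacy $\alpha\cong\beta$ induces a unit-preserving product system isomorphism, which must carry the refined second-order decomposable structure of $\alpha$ onto that of $\beta$ — contradicting the discrepancy just established.

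**Main obstacle.** The genuinely delicate point is isolating the \emph{correct} invariant: $(D_\infty,\kappa)$ is too coarse, so one needs a canonical sub-object of $E_z$ (built functorially from the product system and the canonical unit) that for CCR records a symmetric $2$-tensor and for CAR an antisymmetric one, and then verifying that \eqref{xy} guarantees these two are non-isomorphic as whatever structure they carry. Concretely, I expect the hard work to be in defining precisely the space of vectors in $E_z$ that are ``sums of products of additive decomposable vectors from $E_x$ and $E_y$'' modulo the embedded $\Omega$-contributions, showing this is a cocycle-conjugacy invariant, and then doing the orthogonality bookkeeping in \eqref{xy} to produce in the CCR case a nonzero symmetric $p\vee q$ that has no CAR counterpart. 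The CAR non-existence direction — ruling out \emph{every} way of writing the relevant antisymmetric vector as such a sum — is where the three conditions of \eqref{xy} must all be used, and getting that argument clean rather than a case analysis is the crux.
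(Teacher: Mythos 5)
Your approach diverges from the paper's, and as outlined it has a gap at its core. The invariant you propose --- the ``second-order decomposable structure'' of $E_{x+y}$ relative to the single factorization $E_{x+y}\cong E_x\otimes E_y$ --- cannot distinguish $\alpha$ from $\beta$. On $1$-particle (i.e.\ additive decomposable) vectors the CCR product map sends $\xi_x\otimes\eta_y$ to the $2$-particle vector $\xi_x\vee V_x\eta_y$ while the CAR product map sends it to $V_x\eta_y\wedge\xi_x$; since $\Ker(V_x^*)\perp V_x\Ker(V_y^*)$, the cross terms in the permanent and the determinant both vanish, so $\langle \xi_x\vee V_x\eta_y,\ \xi'_x\vee V_x\eta'_y\rangle=\langle \xi_x,\xi'_x\rangle\langle\eta_y,\eta'_y\rangle=\langle V_x\eta_y\wedge\xi_x,\ V_x\eta'_y\wedge\xi'_x\rangle$. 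Hence the two ``mixed'' second-order spaces are canonically isometric in a way that intertwines the product maps restricted to $D_x\otimes D_y$: the Hilbert space of sums of products of decomposable vectors (modulo the $\Omega$-contributions), with its bilinear product from $D_x\times D_y$, is literally the same structure for CCR and CAR. Your third step (``no such configuration can occur for $\beta$'') is therefore not just unproven but false as stated --- the wedge $p\wedge q$-type vectors are perfectly attainable as products in the CAR system. The symmetric/antisymmetric distinction is invisible to any invariant built from one factorization and one Hilbert subspace, which is why your ``main obstacle'' paragraph cannot be filled in along the lines you sketch.

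What actually separates the two flows --- and this is the paper's route --- is an algebraic rather than a vector-space phenomenon, obtained by playing the factorization through $x$ against the factorization through $y$ inside $E_{x+y}$. The paper forms the relative commutants $\alpha_x(\m)\cap\alpha_{x+y}(\m)'$ and $\alpha_y(\m)\cap\alpha_{x+y}(\m)'$, checks directly from the cocycle identity $U_{x+y}=U_x\alpha_x(U_y)$ that a cocycle conjugacy carries them onto the corresponding CAR algebras (no appeal to units or gauge transitivity is needed), cuts down further by the commutant of $\alpha_x(\m)\cap\alpha_y(\m)\cap\alpha_{x+y}(\m)'$ so that condition (\ref{xy}) supplies two nontrivial algebras supported on the orthogonal subspaces $V_x(K_y)\cap(V_x(K_y)\cap V_y(K_x))^\perp$ and $V_y(K_x)\cap(V_x(K_y)\cap V_y(K_x))^\perp$, and then observes that these two algebras commute in the CCR case but anticommute in the CAR case, because the CAR generators are creation/annihilation operators twisted by the reflection operators $R_{x+y}R_{x,y}$. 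If you want to rescue your strategy you would have to encode exactly this commutation-versus-anticommutation data into your invariant, which in effect forces you back to the local-algebra argument.
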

 
 \begin{proof} Suppose let $\beta_x = Ad(U)Ad(U_x) \alpha_x Ad(U^*)$ where $\{U_x:x \in P\}$ be a cocycle for $\alpha$ and $U$ is a unitary operator between $\Gamma_s(K)$ to  $\Gamma_a(K)$.  
 
 Set $\m_s=  B(\Gamma_s(K))$ and $\m_a=  B(\Gamma_a(K))$  Then for any two $x,y\in P$ we have  \begin{align*} Ad(U U_{x+y})\left(\alpha_x(\m_s)\bigcap \alpha_{x+y}(\m_s)'\right) & = Ad(U U_{x}\alpha_x(U_y) ) (\alpha_x(\m_s) )\bigcap  Ad(U U_{x+y}) (\alpha_{x+y}(\m_s)' )\\
 & = Ad(U U_{x}) (\alpha_x(\m_s) ) \bigcap Ad(U U_{x+y}) ( \alpha_{x+y}(\m_s))'\\
 &= \beta_x(\m_a)  \bigcap\beta_{x+y}(\m_a)'.
 \end{align*}
 Similarly we have  $Ad(U U_{x+y})\left(\alpha_y(\m_s)\bigcap \alpha_{x+y}(\m_s)'\right) = \beta_y(\m_a)  \bigcap\beta_{x+y}(\m_a)'.$
 
 Now it is clear if $\left(\alpha_x(\m_s)\cap \alpha_{x+y}(\m_s)'\right)  \bigcap \left( \alpha_x(\m_s) \cap \alpha_y(\m_s)\cap \alpha_{x+y}(\m_s)' \right)'$ and \linebreak  $\left(\alpha_y(\m_s)\cap \alpha_{x+y}(\m_s)'\right)  \bigcap \left(\alpha_x(\m_s)  \cap \alpha_y(\m_s)\cap \alpha_{x+y}(\m_s)' \right)'$ are non-trivial and commute with each other, then 
 $\left(\beta_x(\m_a)\cap \beta_{x+y}(\m_a)'\right)  \bigcap \left( \beta_x(\m_a)  \cap \beta_y(\m_a)\cap \beta_{x+y}(\m_a)' \right)'$  and \linebreak   $\left(\beta_y(\m_a)\cap \beta_{x+y}(\m_a)'\right)  \bigcap \left( \beta_x(\m_a)  \cap \beta_y(\m_a)\cap \beta_{x+y}(\m_a)' \right)'$ are also non-trivial and should commute with each other.
 Now to prove the non-cocycle-conjugacy between $\alpha$ and $\beta$, we show, using (\ref{xy}),  that all the above algebras are non-trivial, and the commutation relations hold for the CCR flow $\alpha$ but not for the CAR flow $\beta$.
 
 By the very definition of CCR flows  in Example \ref{ex-CCR-CAR}, we can identify $\alpha_{x+y}(\m_s)'$ with $B(\Gamma_s(K_{x+y}))$  through the natural isomorphism defined in (\ref{UCCR}). Now $\alpha_x(\m_s)\cap \alpha_{x+y}(\m_s)'$ can be identified with $B(\Gamma_s(V_xK_y))\subseteq B(\Gamma_s(K_{x+y}))$ and $\alpha_y(\m_s)\cap \alpha_{x+y}(\m_s)'$  with $B(\Gamma_s(V_yK_x))\subseteq B(\Gamma_s(K_{x+y}))$, using the natural embeddings. Since $B(\cdot)$ of symmetric Fock spaces of orthogonal subspaces commute, by the condition (\ref{xy}), the algebras mentioned are non-trivial and commute, as claimed for $\alpha$. 
 
 For $\beta$ there is a twist.  The definition of CAR in Example \ref{ex-CCR-CAR} imply, through the unitary map defined in (\ref{UCAR}), that $\beta_{x+y}(\m_a)' = \{a^{\#} (\xi) R_{x+y} : \xi \in K_{x+y}\}'',$ where $a^{\#} (\xi)$ denote either the Fermionic creation operator or the annihilation operator, and $R_{x+y}$ is defined by the unitary extension of  
 $$V_{x+y} \eta_1 \wedge  \cdots \wedge V_{x+y} \eta_{n} \wedge \xi_1 \wedge    \cdots \wedge \xi_{m} \mapsto (-1)^n  V_{x+y} \eta_1 \wedge  \cdots \wedge V_{x+y} \eta_{n} \wedge \xi_1 \wedge   \cdots \wedge \xi_{m} ,$$ for $\eta_1, \cdots \eta_n \in K$ and $\xi_1, \cdots \xi_m \in K_{x+y}$. Also we have  \begin{align*}  & \beta_x(\m_s)\cap \beta_{x+y}(\m_s)'= \{a^{\#} (\xi) R_{x+y} : \xi \in V_xK_{y}\}'';\\  & \beta_y(\m_s)\cap \beta_{x+y}(\m_s)' =\{a^{\#} (\xi) R_{x+y} : \xi \in V_yK_{x}\}''.\end{align*} Now through the same analysis we get \begin{align*}
 & \left(\beta_x(\m_a)\cap \beta_{x+y}(\m_a)'\right)  \bigcap \left( \beta_x(\m_a)  \cap \beta_y(\m_a)\cap \beta_{x+y}(\m_a)' \right)'\\ & =  \{a^{\#} (\xi) R_{x+y} R_{x,y}: \xi \in V_x(K_y) \cap (V_x(K_y)\cap V_y(K_x))^\perp\}'';\\
  & \left(\beta_y(\m_a)\cap \beta_{x+y}(\m_a)'\right)  \bigcap \left( \beta_x(\m_a)  \cap \beta_y(\m_a)\cap \beta_{x+y}(\m_a)' \right)'\\ & =  \{a^{\#} (\xi) R_{x+y} R_{x,y}: \xi \in V_y(K_x) \cap (V_x(K_y)\cap V_y(K_x))^\perp\}'', 
 \end{align*} where $R_{x,y}$ is defined by the unitary extension of  
 $$\eta_1 \wedge  \cdots \wedge \eta_{n} \wedge \xi_1 \wedge    \cdots \wedge \xi_{m} \mapsto (-1)^n  \eta_1 \wedge  \cdots \wedge  \eta_{n} \wedge \xi_1 \wedge   \cdots \wedge \xi_{m} ,$$ for $\eta_1, \cdots \eta_n \in V_x(K_y)\cap V_y(K_x)$ and $\xi_1, \cdots \xi_m \in (V_x(K_y)\cap V_y(K_x))^\perp$. 
 
 Now the algebras are clearly non-trivial, thanks to condition (\ref{xy}).  $R_{x+y} R_{x,y}$ commutes with $a^{\#} (\xi)$ for any $\xi \in V_x(K_y) \cap (V_x(K_y)\cap V_y(K_x))^\perp$ and for any  $\xi \in V_y(K_x) \cap (V_x(K_y)\cap V_y(K_x))^\perp$. So the operators $a^{\#} (\xi) R_{x+y} R_{x,y}$ and $a^{\#} (\eta) R_{x+y} R_{x,y}$ anticommute for $\xi \in V_x(K_y) \cap (V_x(K_y)\cap V_y(K_x))^\perp$ and $\eta \in V_y(K_x) \cap (V_x(K_y)\cap V_y(K_x))^\perp$. The proof of non-cocycle-conjugacy is finally over. 
 \end{proof}

 \end{document}